\newtheorem{proposition}{Proposition}[section]
\newtheorem{corollary}[proposition]{Corollary}
\newtheorem{theorem}[proposition]{Theorem}
\theoremstyle{definition}
\newtheorem{definition}[proposition]{Definition}
\newtheorem{example}[proposition]{Example}
\newcommand{\thlabel}[1]{\label{th:#1}}
\newcommand{\thref}[1]{Theorem~\ref{th:#1}}
\newcommand{\selabel}[1]{\label{se:#1}}
\newcommand{\prlabel}[1]{\label{pr:#1}}
\newcommand{\prref}[1]{Proposition~\ref{pr:#1}}
\newcommand{\colabel}[1]{\label{co:#1}}
\newcommand{\coref}[1]{Corollary~\ref{co:#1}}
\newcommand{\exlabel}[1]{\label{ex:#1}}
\newcommand{\delabel}[1]{\label{de:#1}}
\newcommand{\eqlabel}[1]{\label{eq:#1}}
\newcommand{\equref}[1]{(\ref{eq:#1})}
\def\lan{\langle}
\def\ran{\rangle}
\def\ot{\otimes}
\newcommand{\Cc}{\mathcal{C}}
\def\*C{{}^*\hspace*{-1pt}{\Cc}}
\def\text#1{{\rm {\rm #1}}}
\begin{document}

\title[The automorphisms group and the classification of gradings]
{The automorphisms group and the classification of gradings of
finite dimensional associative algebras}

\author{G. Militaru}
\address{Faculty of Mathematics and Computer Science, University of Bucharest, Str.
Academiei 14, RO-010014 Bucharest 1, Romania}
\address{Simion Stoilow Institute of Mathematics of the Romanian Academy, P.O. Box 1-764, 014700 Bucharest, Romania}
\email{gigel.militaru@fmi.unibuc.ro and gigel.militaru@gmail.com}

\thanks{This work was supported by a grant of the Ministry of Research,
Innovation and Digitization, CNCS/CCCDI--UEFISCDI, project number
PN-III-P4-ID-PCE-2020-0458, within PNCDI III}

\subjclass[2010]{16W20, 16W50, 16T10} \keywords{automorphisms
group of associative algebras, classifications of gradings.}

\maketitle

\begin{abstract}
Let $A$ be an $n$-dimensional algebra over a field $k$ and $a(A)$
its quantum symmetry semigroup. We prove that the automorphisms
group ${\rm Aut}_{\rm Alg} (A)$ of $A$ is isomorphic to the group
$U \bigl( G(a (A)^{\rm o} ) \bigl)$ of all invertible group-like
elements of the finite dual $a (A)^{\rm o}$. For a group $G$, all
$G$-gradings on $A$ are explicitly described and classified: the
set of isomorphisms classes of all $G$-gradings on $A$ is in
bijection with the quotient set $ {\rm Hom}_{\rm BiAlg} \, \bigl(
a (A) , \, k[G] \bigl)/\approx$ of all bialgebra maps $a (A) \,
\to  k[G]$, via the equivalence relation implemented by the
conjugation with an invertible group-like element of $a (A)^{\rm
o}$.
\end{abstract}

\section*{Introduction}

The quantum symmetry semigroup $a(A)$ of a finite dimensional
associative algebra $A$ over a field $k$, introduced by Manin
\cite{Manin} and Tambara \cite{Tambara}, is used to approach two
classical open problems: the description of the automorphisms
group ${\rm Aut}_{\rm Alg} (A)$ of $A$ and the classification of
all $G$-gradings on $A$, for an arbitrary group $G$. The
description of the automorphisms group of an algebra is an old
problem arising from Hilbert's invariant theory: the first notable
result obtained was the Skolem-Noether theorem \cite{skolem}.
Since then a lot a literature was devoted to this problem: see for
instance \cite{bavula, cek, SU} and their list of references. The
classification of all $G$-gradings on an algebra $A$ is a problem
formulated by Zelmanov \cite{Zelmanov} in the case that $A = {\rm
M}_n (k)$, the usual matrix algebra. The problem is interesting
but very diffficult and has generated a lot of interest. It is
also far from being solved: see \cite{BSZ, BS, SIN, GorS} and
their references. The similar question for the classification of
all $G$-gradings on Lie algebras was also studied a lot
\cite{eld}.

In this paper a new approach to the two above mentioned problems
is proposed using the power of a key object in quantum groups and
noncommutative geometry, namely the quantum symmetry semigroup
$a(A)$ \cite{Manin, Tambara} of an $n$-dimensional algebra $A$.
Let $\{e_1 := 1_A, \cdots, e_n\}$ be a $k$-basis of $A$ and
$\{\tau_{i, j}^s \, | \, i, j, s = 1, \cdots, n \}$ the structure
constants of $A$. Then, $a(A)$ is the free algebra generated by
$\{x_{si} \, | \, s, i = 1, \cdots, n, \}$ and the relations:
\begin{equation}\eqlabel{relatii2intro}
\sum_{u = 1}^n  \, \tau_{i, j}^u \, x_{au} = \sum_{s, t = 1}^n \,
\tau_{s, t}^a \, x_{si} x_{tj}, \qquad x_{a1} = \delta_{a, 1}
\end{equation}
for all $a$, $i$, $j = 1, \cdots, n$, where $\delta_{a, 1}$ is the
Kroneker symbol. Furthermore, $a(A)$ has a canonical bialgebra
structure such that the canonical map $ \eta_{A} : A \to A \ot a
(A)$, $\eta_{A} (e_i) := \sum_{s=1}^n \, e_s \ot x_{si}$, for all
$i = 1, \cdots, n$ is a right $a(A)$-comodule algebra structure on
$A$ and the pair $(a(A), \, \eta_{A})$ is the initial object of
the category of all bialgebras coacting on $A$
(\thref{univbialg}). The bialgebra $a(A)$ captures most of the
essential information of the algebra $A$ and this idea is
exploited in this paper. \thref{automorf} provides an explicit
description of a group isomorphism between the group of
automorphisms of $A$ and the group of all invertible group-like
elements of the finite dual $a(A)^{\rm o}$:
$$
{\rm Aut}_{\rm Alg} (A) \cong U \bigl(G\bigl( a (A)^{\rm o} \bigl)
\bigl).
$$
\prref{graduari} proves that for an arbitrary group $G$ there
exists an explicitly described bijection between the set of all
$G$-gradings on $A$ and the set of all bialgebra homomorphisms
$a(A) \to k[G]$. Furthermore, all $G$-gradings on $A$ are
explicitly classified and parameterized in \thref{nouaclas}: the
set $G-{\rm \textbf{gradings}}(A)$ of isomorphisms classes of all
$G$-gradings on $A$ is in bijection with the quotient set $ {\rm
Hom}_{\rm BiAlg} \, \bigl( a(A), \, k[G] \bigl)/\approx$ of all
bialgebra maps $a(A) \, \to  k[G]$ via the equivalence relation
implemented by the usual conjugation with an invertible group-like
element of $a (A)^{\rm o}$. Similar results where recently
obtained for Leibniz/Lie algebras \cite{AM2020}.

\section{Preliminaries}\selabel{prel}
All vector spaces, (bi)linear maps, associative algebras or
bialgebras are over an arbitrary field $k$ and $\ot = \ot_k$. We
shall denote by ${\rm Alg}_k$ the category of all unital
associative algebras and unital persevering morphisms. For $A \in
{\rm Alg}_k$ we denote by ${\rm Aut}_{\rm Alg} (A)$ the
automorphisms group of $A$ and $k[G]$ denotes the usual group
algebra of a group $G$. Let $G$ be a group and $A$ an algebra. We
recall that a \emph{$G$-grading} on $A$ is a vector space
decomposition $A = \oplus_{\sigma \in G} A_{\sigma}$ such that
$A_{\sigma}A_{\tau} \subseteq A_{\sigma \tau}$, for all $\sigma$,
$\tau \in G$. Two $G$-gradings $A = \oplus_{\sigma \in G} \,
A_{\sigma} = \oplus_{\sigma \in G} \, A_{\sigma} ^{'}$ on $A$ are
called \emph{isomorphic} if there exists $w \in {\rm Aut}_{{\rm
Alg}} (A)$ an algebra automorphism of $A$ such that $w
(A_{\sigma}) \subseteq A_{\sigma} ^{'}$, for all $\sigma \in G$.
Since $w$ is bijective and $A$ is $G$-graded we can prove easily
that the last condition is equivalent to $w (A_{\sigma}) =
A_{\sigma} ^{'}$, for all $\sigma \in G$, which is the condition
that usually appears in the literature in the classification of
$G$-gradings (\cite{eld}). For more details on the theory of
$G$-graded rings we refer to \cite{NO}.

For any bialgebra $H$ the set of group-like elements, denoted by
$G(H) := \{g\in H \, | \, \Delta (g) = g \ot g  \,\, {\rm and }
\,\, \varepsilon(g) = 1 \}$, is a monoid with respect to the
multiplication of $H$. We denote by $H^{\rm o}$, the finite dual
bialgebra of $H$, i.e.
$$
H^{\rm o} := \{ f \in H^* \,| \, f(I) = 0, \, {\rm for \, some \,
ideal} \,\, I \lhd H \,\, {\rm with} \,\, {\rm dim}_k (H/I) <
\infty \}
$$
It is well known (see for instance \cite[pag. 62]{radford}) that
$G(H^{\rm o}) = {\rm Hom}_{\rm Alg_k} (H, \,  k)$, the set of all
algebra homomorphisms $H\to k$. For a later use we also recall the
following well know result (\cite[Example 4.1.7]{Mont}): let $A$
be an algebra and $G$ a group. Then there exists a bijection
between the set of all $G$-gradings on $A$ and the set of all
right $k[G]$-comodule algebra structures on $A$ given such that
the $G$-grading on $A$ associated to a right $k[G]$-comodule
algebra structures $\varrho: A \to A \ot k[G]$ is given for any
$\sigma \in G$ by:
\begin{equation}\eqlabel{montgra}
A_{\sigma} = \{ a\in A \, | \, \varrho (a) = a \ot \sigma \}
\end{equation}
For unexplained notions pertaining Hopf algebras we refer the
reader to \cite{Mont, radford, Sw}.

\textbf{Tambara's construction revised.} We shall recall the
construction and the main properties of the quantum symmetry
semigroup of a finite dimensional algebra $A$. The results below
in this sections are well-known or have been recently proven in
much more general cases \cite{anagorj, anagorj2}. The following is
\cite[Theorem 1.1]{Tambara}: we shall present a short proof since
we prefer to give an explicit construction of the algebra $a(A)$,
using generators and relations implemented by the structure
constants of $A$ in the spirit of \cite[Theorem 3.2]{ana2019}.

\begin{theorem}\thlabel{adjunctie}
Let $A$ be a finite dimensional algebra over a field $k$. Then the
functor $A \ot - \, : {\rm Alg}_k \to {\rm Alg}_k$ has a left
adjoint $a(A, \, -)$.
\end{theorem}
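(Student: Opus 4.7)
The plan is to construct the value $a(A, B)$ explicitly by generators and relations for every $B \in {\rm Alg}_k$, in the spirit of \equref{relatii2intro}, and then to verify the hom-set bijection ${\rm Hom}_{{\rm Alg}_k}\bigl( a(A, B), \, C \bigr) \cong {\rm Hom}_{{\rm Alg}_k}\bigl( B, \, A \ot C \bigr)$ directly. Fix a $k$-basis $\{e_1 := 1_A, \ldots, e_n\}$ of $A$ with structure constants $e_i e_j = \sum_s \tau_{i,j}^s e_s$, and present $B$ as a unital quotient $B = k\langle y_i \rangle_{i \in I}/R$ of a free algebra. First I would introduce formal variables $\{x_{si} : s = 1, \ldots, n, \, i \in I\}$, set $F := k\langle x_{si} \rangle$, and define the unital algebra map $\hat{\eta} : k\langle y_i \rangle \to A \ot F$ by $y_i \mapsto \sum_{s=1}^n e_s \ot x_{si}$. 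Because $\{e_s\}$ is a basis of the finite-dimensional algebra $A$, each $w \in k\langle y_i \rangle$ admits a \emph{unique} decomposition $\hat{\eta}(w) = \sum_{s=1}^n e_s \ot w_s$ with $w_s \in F$. Let $J \lhd F$ be the two-sided ideal generated by $\{r_s : r \in R, \, s = 1, \ldots, n\}$, and set $a(A, B) := F/J$; by construction, $\hat{\eta}$ descends to a unital algebra map $\eta_B : B \to A \ot a(A, B)$.

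Next, I would verify that the assignment $\phi \mapsto ({\rm id}_A \ot \phi) \circ \eta_B$ gives the required natural bijection ${\rm Hom}_{{\rm Alg}_k}\bigl( a(A, B), \, C \bigr) \to {\rm Hom}_{{\rm Alg}_k}\bigl( B, \, A \ot C \bigr)$. Given an algebra map $f : B \to A \ot C$, writing uniquely $f(y_i) = \sum_s e_s \ot c_{si}$ defines an algebra map $\tilde{f} : F \to C$ on the free generators by $x_{si} \mapsto c_{si}$. Since $f$ and $({\rm id}_A \ot \tilde{f}) \circ \hat{\eta}$ agree on the generators of $k\langle y_i \rangle$, they agree on the whole free algebra; as $f$ kills $R$, the identity $\sum_s e_s \ot \tilde{f}(r_s) = 0$ together with the linear independence of $\{e_s\}$ forces $\tilde{f}(r_s) = 0$ on every generator of $J$. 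Hence $\tilde{f}$ descends to a unique algebra map $a(A, B) \to C$ satisfying $({\rm id}_A \ot \tilde{f}) \circ \eta_B = f$, the uniqueness being forced by the fact that the classes of the $x_{si}$ generate $a(A, B)$ as an algebra. Naturality in both $B$ and $C$ is then a formal check.

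The only genuinely subtle point is the bookkeeping showing that $J$ captures \emph{exactly} the obstructions for $\hat{\eta}$ to respect the relations of $B$: the finite dimensionality of $A$ is essential here, as it both ensures that the unique decomposition $f(y_i) = \sum_s e_s \ot c_{si}$ involves only finitely many new generators $x_{si}$ per $y_i$, and allows each $r \in R$ to be translated into the $n$ scalar relations $r_s = 0$ inside $F$. Specialising to $B = A$ with generators $\{e_i\}_{i=1}^n$ modulo the defining relations $e_1 = 1_A$ and $e_i e_j = \sum_s \tau_{i,j}^s e_s$ will recover precisely \equref{relatii2intro}, yielding the explicit presentation of $a(A) = a(A, A)$ used throughout the rest of the paper.
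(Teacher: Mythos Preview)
Your argument is correct. The route is close in spirit to the paper's but differs in one organizational choice. The paper presents $B$ via a chosen $k$-linear \emph{basis} $\{f_i\}_{i\in I}$ with $f_{i_0}=1_B$ and structure constants $\beta_{i,j}^u$, and then defines $a(A,B)$ as the free algebra on $\{x_{si}\}$ modulo the explicit relations
\[
\sum_{u\in B_{i,j}}\beta_{i,j}^u\,x_{au}=\sum_{s,t=1}^n\tau_{s,t}^a\,x_{si}x_{tj},\qquad x_{a i_0}=\delta_{a,1}.
\]
You instead fix an arbitrary unital presentation $B=k\langle y_i\rangle/R$ and let $J$ be generated by the $A$-components $r_s$ of $\hat\eta(r)$ for $r\in R$. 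The two constructions agree: when the $y_i$ are taken to be a linear basis and $R$ is generated by $y_iy_j-\sum_u\beta_{i,j}^u y_u$ together with $y_{i_0}-1$, your $J$ is exactly the paper's relation ideal. What the paper's choice buys is a canonical, completely explicit presentation tied to the structure constants, which is what is used verbatim in the subsequent description of $a(A)=a(A,A)$ and in the examples. What your choice buys is economy: a small presentation of $B$ (finitely many generators and relations) yields immediately a small presentation of $a(A,B)$, without having to index generators by an entire linear basis of $B$.
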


\begin{proof}
Assume that ${\rm dim}_k (A) = n$ and fix $\{e_1, \cdots, e_n\}$ a
basis in $A$ over $k$ such that $e_1 = 1_A$, the unit of $A$. Let
$\{\tau_{i, j}^s \, | \, i, j, s = 1, \cdots, n \}$ be the
structure constants of $A$, i.e. for any $i$, $j = 1, \cdots, n$
we have:
\begin{equation}\eqlabel{const1}
e_i \, e_j  = \sum_{s=1}^n \, \tau_{i, j}^s \, e_s.
\end{equation}
Let $B$ be an arbitrary algebra and let $\{f_i \, | \, i \in I\}$
be a basis of $B$ containing $1_B$, i.e. $1_B = f_{i_0}$, for some
$i_0 \in I$. For any $i$, $j\in I$, let $B_{i,j} \subseteq I$ be a
finite subset of $I$ such that for any $i$, $j \in I$ we have:
\begin{equation}\eqlabel{const2}
f_i \, f_j  = \sum_{u \in B_{i, j}} \, \beta_{i, j}^u \, f_{u}.
\end{equation}
Let $a(A, \, B)$ be the free algebra generated by $\{x_{si} \, | s
= 1, \cdots, n, \, i\in I \}$ and the relations:
\begin{equation}\eqlabel{relatii}
\sum_{u \in B_{i, j}} \, \beta_{i, j}^u \, x_{au} = \sum_{s, t =
1}^n \, \tau_{s, t}^a \, x_{si} x_{tj}, \qquad x_{ai_0} =
\delta_{a, 1}
\end{equation}
for all $a = 1, \cdots, n$ and $i, \, j\in I$, where $\delta_{a,
1}$ is the Kroneker symbol. Using the relations \equref{relatii}
we can easily see that the map defined by
\begin{equation}\eqlabel{unitadj}
\eta_{B} : B \to A \ot a (A, \, B), \quad \eta_{B} (f_i) :=
\sum_{s=1}^n \, e_s \ot x_{si}, \quad {\rm for\,\, all}\,\, i\in
I.
\end{equation}
is an algebra homomorphism preserving the unit. Furthermore, for
any $k$-algebra $C$ the canonical map:
\begin{equation}\eqlabel{adjp}
\gamma_{B, \, C} \, : {\rm Hom}_{\rm Alg_k} \, ( a (A, \, B), \,
C) \to {\rm Hom}_{\rm Alg_k} \, (B, \, A \ot C), \quad \gamma_{B,
\, C} (\theta) := \bigl( {\rm Id}_{A} \ot \theta \bigl) \circ
\eta_{B}
\end{equation}
is bijective, that is for any algebra map $f : B \to A\ot C$ there
exists a unique algebra homomorphism $\theta : A (A, \, B) \to C$
such that the following diagram is commutative:
\begin{eqnarray} \eqlabel{diagrama10}
\xymatrix {& B \ar[r]^-{\eta_{B} } \ar[dr]_{f} & {A \ot a (A,
\, B)} \ar[d]^{ {\rm Id}_{A} \ot \theta }\\
& {} & {A \ot C}}
\end{eqnarray}
i.e. $f = \bigl( {\rm Id}_{A} \ot \theta \bigl) \circ \,
\eta_{B}$. The natural isomorphism \equref{adjp} proves that the
functor $a (A, \, - )$ is a left adjoint of $ A \ot -$.
\end{proof}

By taking $C := k$ in the bijection described in \equref{adjp} we
obtain:

\begin{corollary}\colabel{morlbz}
Let $A$ and $B$ be two algebras such that $A$ is finite
dimensional. Then the following map is bijective:
\begin{equation}\eqlabel{adjpk}
\gamma \, : {\rm Hom}_{\rm Alg_k} \, ( a (A, \, B), \, k) \to {\rm
Hom}_{\rm Alg_k} \, (B, \, A), \quad \gamma (\theta) := \bigl(
{\rm Id}_{A} \ot \theta \bigl) \circ \eta_{B}.
\end{equation}
\end{corollary}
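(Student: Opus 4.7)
The plan is to derive this corollary as a direct specialization of \thref{adjunctie} rather than reproving anything. Set $C := k$ in the adjunction bijection \equref{adjp}; this yields a bijection
$$
\gamma_{B, \, k} \, : \, \Hom_{\rm Alg_k}\bigl(a(A, \, B), \, k\bigl) \to \Hom_{\rm Alg_k}\bigl(B, \, A \ot k\bigl), \quad \theta \mapsto (\Id_A \ot \theta) \circ \eta_B.
$$
Next, I would invoke the canonical algebra isomorphism $A \ot k \cong A$, $a \ot \lambda \mapsto \lambda a$, which is natural and preserves units. Composing $\gamma_{B,k}$ with the induced isomorphism $\Hom_{\rm Alg_k}(B,\, A \ot k) \cong \Hom_{\rm Alg_k}(B,\, A)$ produces exactly the map $\gamma$ of the statement. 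Since both $\gamma_{B,k}$ and the identification are bijections, so is $\gamma$.

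The only thing that deserves a quick sanity check is unit-preservation of $\gamma(\theta)$, which is needed for $\gamma(\theta)$ to actually land in $\Hom_{\rm Alg_k}(B, \, A)$ rather than in the larger space of $k$-linear maps. This is forced by the relation $x_{a i_0} = \delta_{a,1}$ from \equref{relatii}: applying $\Id_A \ot \theta$ to $\eta_B(1_B) = \eta_B(f_{i_0}) = \sum_s e_s \ot x_{s i_0}$ gives $\sum_s \delta_{s,1}\, e_s = e_1 = 1_A$. No further argument is required.

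In short, there is no genuine obstacle here: the work was already done in \thref{adjunctie}, and the corollary is a formal consequence obtained by specializing the codomain. Accordingly, I would write the proof as a single sentence invoking \thref{adjunctie} together with the canonical identification $A \ot k \cong A$.
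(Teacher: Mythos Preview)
Your proposal is correct and matches the paper's own approach exactly: the paper simply states that the corollary follows by taking $C := k$ in the bijection \equref{adjp}. Your additional remarks about the canonical identification $A \ot k \cong A$ and the unit check are fine but more detailed than what the paper bothers to write.
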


\begin{definition} \delabel{alguniv}
Let $A$ be a finite dimensional algebra over a field $k$. The
algebra $a (A) := a (A, \, A)$ is called the \emph{quantum
symmetry semigroup} of $A$.
\end{definition}

The explicit description, using generators and relations, of
algebra $a (A)$ is the following: let $\{e_1, \cdots, e_n\}$ be a
$k$-basis of $A$ such that $e_1 = 1_A$ and let $\{\tau_{i, j}^s \,
| \, i, j, s = 1, \cdots, n \}$ be the structure constants of $A$.
Then, $a(A)$ is the free algebra generated by $\{x_{si} \, | s, i
= 1, \cdots, n, \}$ and the relations:
\begin{equation}\eqlabel{relatii2}
\sum_{u = 1}^n  \, \tau_{i, j}^u \, x_{au} = \sum_{s, t = 1}^n \,
\tau_{s, t}^a \, x_{si} x_{tj}, \qquad x_{a1} = \delta_{a, 1}
\end{equation}
for all $a$, $i$, $j = 1, \cdots, n$, where $\delta_{a, 1}$ is the
Kroneker symbol. Furthermore, the canonical map
\begin{equation}\eqlabel{unitadj2}
\eta_{A} : A \to A \ot a (A), \quad \eta_{A} (e_i) := \sum_{s=1}^n
\, e_s \ot x_{si}, \quad {\rm for\,\, all}\,\, i = 1, \cdots, n
\end{equation}
is an algebra homomorphism. The bijection given by \equref{adjp}
applied for $B:= A$ shows that the algebra $a(A)$ satisfies the
following first universal property:

\begin{corollary}\colabel{initialobj}
Let $A$ be a finite dimensional algebra. Then for any algebra $C$
and any algebra homomorphism $f : A \to A \ot C$, there exists a
unique algebra homomorphism $\theta: a(A) \to C$ such that $f =
({\rm Id}_{A} \ot \theta) \circ \eta_{A}$, i.e. the following
diagram is commutative:
\begin{eqnarray} \eqlabel{univerah}
\xymatrix {& A \ar[rr]^-{\eta_{A} } \ar[rrd]_{ f
} & {} & {A \ot a(A) } \ar[d]^{ {\rm Id}_{A} \ot \theta }\\
& {}  & {} & {A \ot C} }
\end{eqnarray}
\end{corollary}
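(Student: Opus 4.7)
The plan is to derive this corollary as an immediate specialization of \thref{adjunctie} via \deref{alguniv}. Taking $B := A$ in the theorem, the algebra $a(A, A)$ is by definition $a(A)$, the generating set $\{x_{si} \mid s = 1, \ldots, n, \ i \in I\}$ collapses to $\{x_{si} \mid s, i = 1, \ldots, n\}$, the relations \equref{relatii} recover \equref{relatii2}, and the algebra homomorphism $\eta_B$ from \equref{unitadj} specializes to the map $\eta_A$ of \equref{unitadj2}. The natural bijection $\gamma_{B, C}$ of \equref{adjp} then yields, for every algebra $C$, a bijection
\begin{equation*}
\gamma_{A, C} : {\rm Hom}_{\rm Alg_k} ( a(A), C) \to {\rm Hom}_{\rm Alg_k} ( A, A \ot C), \qquad \theta \mapsto (\Id_A \ot \theta) \circ \eta_A.
\end{equation*}
Surjectivity of $\gamma_{A, C}$ provides, for any algebra homomorphism $f : A \to A \ot C$, some $\theta : a(A) \to C$ satisfying $f = (\Id_A \ot \theta) \circ \eta_A$, while injectivity of $\gamma_{A, C}$ gives uniqueness of such $\theta$. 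This is exactly the claim of the corollary, so nothing further is required.

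If one preferred a self-contained derivation bypassing the general adjunction, existence of $\theta$ would be obtained by expanding $f(e_i) = \sum_{s=1}^{n} e_s \ot c_{si}$ with uniquely determined coefficients $c_{si} \in C$, and defining $\theta$ on the generators of $a(A)$ by $\theta(x_{si}) := c_{si}$. The main (and essentially only) obstacle in this alternative route is verifying that $\theta$ descends to the quotient $a(A)$, i.e.\ that the defining relations \equref{relatii2} are respected: multiplicativity of $f$ translates exactly into the first family of relations, whereas unitality $f(1_A) = 1_A \ot 1_C$ encodes the second. Uniqueness would then be automatic since the generators $x_{si}$ algebraically generate $a(A)$.
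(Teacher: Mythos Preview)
Your proof is correct and matches the paper's own treatment: the paper simply remarks that the bijection \equref{adjp} applied for $B := A$ yields the corollary, which is precisely your first paragraph. The second paragraph, sketching a direct construction of $\theta$ from the coefficients $c_{si}$, is a correct but unnecessary addendum not present in the paper.
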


The algebra $a(A)$ is also a bialgebra such that $(A, \eta_{A})$
is a right $a(A)$-comodule algebra (\cite{Tambara}). Moreover, we
can explicitly describe the coalgebra structure on $a(A)$ as
follows:

\begin{proposition} \prlabel{bialgebra}
Let $A$ be a $k$-algebra of dimension $n$. Then there exists a
unique bialgebra structure on $a(A)$ such that the algebra
homomorphism $\eta_{A} : A \to A \ot a(A)$ becomes a right
$a(A)$-comodule structure on $A$. More precisely, the
comultiplication and the counit on $a(A)$ are given for any $i$,
$j=1, \cdots, n$ by
\begin{equation} \eqlabel{deltaeps}
\Delta (x_{ij}) = \sum_{s=1}^n \, x_{is} \ot x_{sj} \quad {\rm
and} \quad  \varepsilon (x_{ij}) = \delta_{i, j}
\end{equation}
\end{proposition}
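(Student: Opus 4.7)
The plan is to construct the comultiplication $\Delta$ and the counit $\varepsilon$ as algebra homomorphisms by direct application of the universal property in \coref{initialobj}, and then to derive coassociativity, the counit axiom and the comodule algebra identities from the \emph{uniqueness} clause of that same property.

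For the counit, I would apply \coref{initialobj} with $C := k$ to the identity algebra map ${\rm Id}_A : A \to A \ot k \cong A$. This produces a unique algebra homomorphism $\varepsilon : a(A) \to k$ satisfying $({\rm Id}_A \ot \varepsilon) \circ \eta_A = {\rm Id}_A$; evaluating on $e_i$ and comparing coefficients in the basis $\{e_s\}$ forces $\varepsilon(x_{si}) = \delta_{s, i}$, which is the second formula in \equref{deltaeps}. For the comultiplication I would apply \coref{initialobj} with $C := a(A) \ot a(A)$ to the algebra map $f := (\eta_A \ot {\rm Id}_{a(A)}) \circ \eta_A : A \to A \ot a(A) \ot a(A)$. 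This yields a unique algebra homomorphism $\Delta : a(A) \to a(A) \ot a(A)$ with $({\rm Id}_A \ot \Delta) \circ \eta_A = (\eta_A \ot {\rm Id}_{a(A)}) \circ \eta_A$; expanding both sides on $e_i$ and matching coefficients of $e_t$ yields $\Delta(x_{ij}) = \sum_{s=1}^n x_{is} \ot x_{sj}$. Note that the right comodule identity and the counit identity for $\eta_A$ are built into the very definitions of $\Delta$ and $\varepsilon$, so once the coalgebra axioms for $a(A)$ are verified, $(A, \eta_A)$ will automatically be a right $a(A)$-comodule algebra.

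Coassociativity and the counit axiom then follow by a standard one-line uniqueness argument. Both $(\Delta \ot {\rm Id}_{a(A)}) \circ \Delta$ and $({\rm Id}_{a(A)} \ot \Delta) \circ \Delta$ are algebra homomorphisms $a(A) \to a(A) \ot a(A) \ot a(A)$, and a short calculation using the defining equation for $\Delta$ twice shows that after tensoring with ${\rm Id}_A$ and precomposing with $\eta_A$ both produce the same algebra map $A \to A \ot a(A) \ot a(A) \ot a(A)$, namely the iterated $\eta_A$. The uniqueness clause of \coref{initialobj} applied with $C := a(A) \ot a(A) \ot a(A)$ then forces the two compositions to coincide. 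Similarly, each of $(\varepsilon \ot {\rm Id}_{a(A)}) \circ \Delta$ and $({\rm Id}_{a(A)} \ot \varepsilon) \circ \Delta$ is an algebra endomorphism of $a(A)$ whose associated algebra map $A \to A \ot a(A)$ reduces, after the natural identifications $k \ot a(A) \cong a(A) \cong a(A) \ot k$, to $\eta_A$ itself; so uniqueness with $C := a(A)$ forces both to equal ${\rm Id}_{a(A)}$.

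Finally, uniqueness of the bialgebra structure is automatic: any other $\Delta'$ and $\varepsilon'$ making $(A, \eta_A)$ a right $a(A)$-comodule algebra satisfy by definition the same characterising equations as $\Delta$ and $\varepsilon$, hence the uniqueness clause of \coref{initialobj} yields $\Delta' = \Delta$ and $\varepsilon' = \varepsilon$. No genuine obstacle is expected — the whole argument is a clean exercise in the adjunction of \thref{adjunctie}, and the only mild care needed is keeping track of which tensor slot carries which copy of $a(A)$ when extracting the explicit formula for $\Delta(x_{ij})$.
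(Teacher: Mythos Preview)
Your proof is correct and follows essentially the same approach as the paper: both construct $\Delta$ and $\varepsilon$ by applying \coref{initialobj} to the iterated coaction $(\eta_A \ot {\rm Id}_{a(A)}) \circ \eta_A$ and to the canonical map $A \to A \ot k$, respectively, and then read off the explicit formulas \equref{deltaeps} by evaluating on basis elements. The one point of divergence is that the paper verifies coassociativity and the counit axiom by direct inspection of the formulas on generators (``Obviously, $\Delta$ given by this formula on generators is coassociative''), whereas you deduce them from the uniqueness clause of \coref{initialobj} applied with $C = a(A)^{\ot 3}$ and $C = a(A)$; your route is slightly more conceptual and has the added benefit of handling the uniqueness of the bialgebra structure in the same stroke, which the paper leaves implicit.
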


\begin{proof}
Consider the algebra homomorphism $f : A \to A \ot a(A)
 \ot a(A) $ defined by $f :=
(\eta_{A} \ot {\rm Id}_{a(A)}) \, \circ \, \eta_{A}$. It follows
from \coref{initialobj} that there exists a unique algebra
homomorphism $\Delta : a(A) \to a(A) \ot a(A)$ such that $({\rm
Id}_{A} \ot \Delta) \circ \eta_{A} = f$; that is, the following
diagram is commutative:
\begin{eqnarray} \eqlabel{delta}
\xymatrix {& A \ar[rr]^-{\eta_{A} } \ar[d]_{ \eta_{A} } & {} & {A
\ot a(A)} \ar[d]^{ {\rm Id}_{A} \ot \Delta }\\
& A \ot a(A) \ar[rr]_-{\eta_{A} \ot {\rm Id}_{a(A)}} & {} & {A \ot
a(A) \ot a(A) } }
\end{eqnarray}
Now, if we evaluate the diagram \equref{delta} at each $e_i$, for
$i = 1, \cdots, n$ we obtain, taking into account
\equref{unitadj2}, the following:
\begin{eqnarray*}
&& \sum_{t=1}^n \, e_t \ot \Delta (x_{ti}) = (\eta_{A} \ot {\rm
Id}) (\sum_{s=1}^n \, e_s \ot x_{si}) = \sum_{s=1}^n (
\sum_{t=1}^n \, e_t \ot x_{ts}) \ot x_{si}\\
&& = \sum_{t=1}^n \, e_t \ot (\sum_{s=1}^n x_{ts} \ot x_{si} )
\end{eqnarray*}
and hence $\Delta (x_{ti}) = \sum_{s=1}^n \, x_{ts} \ot x_{si}$,
for all $t$, $i=1, \cdots, n$. Obviously, $\Delta$ given by this
formula on generators is coassociative. In a similar fashion,
applying once again \coref{initialobj}, we obtain that there
exists a unique algebra homomorphism $\varepsilon: a(A) \to k$
such that the following diagram is commutative:
\begin{eqnarray} \eqlabel{epsilo}
\xymatrix {& A \ar[rr]^-{\eta_{A} } \ar[drr]_{ {\rm can} } & {} &
{A \ot a(A)} \ar[d]^{ {\rm Id}_{A} \ot \varepsilon }\\
& {} & {} & {A \ot k} }
\end{eqnarray}
where ${\rm can} : A \to A\ot k$ is the canonical isomorphism,
${\rm can} (x) = x \ot 1$, for all $x\in A$. If we evaluate this
diagram at each $e_t$, for $t = 1, \cdots, n$, we obtain
$\varepsilon (x_{ij}) = \delta_{i, j}$, for all $i$, $j=1, \cdots,
n$. It can be easily checked that $\varepsilon$ is a counit for
$\Delta$, thus $a(A)$ is a bialgebra. Furthermore, the
commutativity of the above two diagrams implies that the canonical
map $\eta_{A} : A \to A \ot a(A)$ defines a right $a(A)$-comodule
structure on $A$, i.e. $A$ is a right $a(A)$-comodule algebra.
\end{proof}

The pair $(a(A), \, \eta_{A})$, with the coalgebra structure
defined in \prref{bialgebra}, is called the {\it universal
coacting bialgebra of $A$}. The reason is that it fullfils the
following universal property which extends \coref{initialobj} and
shows that $(a(A), \, \eta_{A})$ is the initial object of the
category of all bialgebras coacting on $A$:

\begin{theorem}\thlabel{univbialg}
Let $A$ be a finite dimensional algebra. Then, for any bialgebra
$H$ and any algebra homomorphism $f \colon A \to A\otimes H$ which
makes $A$ into a right $H$-comodule there exists a unique
bialgebra homomorphism $\theta \colon a(A) \to H$ such that the
following diagram is commutative:
\begin{eqnarray} \eqlabel{univbialg}
\xymatrix {& A \ar[r]^-{\eta_{A}} \ar[dr]_-{f
} & {A \ot a(A)} \ar[d]^{ {\rm Id}_{A} \ot \theta }\\
& {}  & {A \ot H} }
\end{eqnarray}
\end{theorem}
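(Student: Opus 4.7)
The plan is to reduce the bialgebra statement to the already-established algebra statement in \coref{initialobj} and then promote the resulting algebra map to a bialgebra map by invoking the uniqueness part of that same corollary.

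First I would apply \coref{initialobj} directly to the algebra homomorphism $f : A \to A \ot H$ (regarding $H$ merely as an algebra). This yields a unique algebra homomorphism $\theta : a(A) \to H$ satisfying $f = ({\rm Id}_A \ot \theta) \circ \eta_A$, which also immediately settles uniqueness: any other bialgebra map with the required property is in particular an algebra map, hence coincides with $\theta$.

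It remains to verify that $\theta$ is compatible with the coalgebra structures, i.e.\ $\varepsilon_H \circ \theta = \varepsilon_{a(A)}$ and $(\theta \ot \theta) \circ \Delta_{a(A)} = \Delta_H \circ \theta$. For the counit, I would use the comodule counit axiom $({\rm Id}_A \ot \varepsilon_H) \circ f = {\rm can}$, combine it with $f = ({\rm Id}_A \ot \theta) \circ \eta_A$ to obtain
\[
\bigl({\rm Id}_A \ot (\varepsilon_H \circ \theta)\bigl) \circ \eta_A = {\rm can},
\]
and compare with the diagram \equref{epsilo} defining $\varepsilon_{a(A)}$; the uniqueness clause in \coref{initialobj} (applied with $C := k$) then forces $\varepsilon_H \circ \theta = \varepsilon_{a(A)}$.

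For the comultiplication, the idea is to exhibit two algebra maps $a(A) \to H \ot H$ that both solve the universal problem for the same test morphism $A \to A \ot (H \ot H)$. Concretely, consider
\[
g := (f \ot {\rm Id}_H) \circ f = ({\rm Id}_A \ot \Delta_H) \circ f : A \to A \ot H \ot H,
\]
the two expressions being equal by the coassociativity axiom of the $H$-coaction $f$. On the one hand, writing $g$ via $f = ({\rm Id}_A \ot \theta) \circ \eta_A$ and using the defining diagram \equref{delta} for $\Delta_{a(A)}$, one computes
\[
g = ({\rm Id}_A \ot \theta \ot \theta) \circ (\eta_A \ot {\rm Id}_{a(A)}) \circ \eta_A = \bigl({\rm Id}_A \ot (\theta \ot \theta) \circ \Delta_{a(A)}\bigl) \circ \eta_A.
\]
On the other hand, directly from $({\rm Id}_A \ot \Delta_H) \circ f$ one gets $g = ({\rm Id}_A \ot \Delta_H \circ \theta) \circ \eta_A$. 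Thus both $(\theta \ot \theta) \circ \Delta_{a(A)}$ and $\Delta_H \circ \theta$ are algebra maps $a(A) \to H \ot H$ that, after the substitution prescribed by $\eta_A$, recover the same map $g$; uniqueness in \coref{initialobj} applied with $C := H \ot H$ then forces them to coincide.

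The only delicate point is the second computation above: ensuring that the substitution of $\eta_A$ into itself on the left tensorand is correctly reorganized as a composition with $\Delta_{a(A)}$ on the right tensorand. Once the diagram \equref{delta} is invoked, however, this is essentially a bookkeeping exercise, and I do not anticipate any genuine obstacle.
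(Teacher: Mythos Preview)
Your proposal is correct and follows essentially the same route as the paper: obtain $\theta$ as an algebra map via \coref{initialobj}, then use the uniqueness clause of that corollary with $C = H \ot H$ (together with diagram \equref{delta} and the coassociativity of $f$) to identify $(\theta \ot \theta)\circ \Delta_{a(A)}$ with $\Delta_H \circ \theta$, and similarly for the counit. The only cosmetic difference is that the paper writes out the chain of equalities for $({\rm Id}_A \ot (\theta \ot \theta)\circ \Delta)\circ \eta_A = ({\rm Id}_A \ot \Delta_H \circ \theta)\circ \eta_A$ explicitly rather than packaging it as you do.
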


\begin{proof} It follows from \coref{initialobj} there exists a unique algebra
map $\theta \colon a(A) \to H$ such that
diagram~\equref{univbialg} commutes. The proof will be finished
once we show that $\theta$ is a coalgebra homomorphism as well.
This follows by using again the universal property of $a(A)$.
Indeed, we obtain a unique algebra homomorphism $\psi \colon a(A)
\to H \otimes H$ such that the following diagram is commutative:
\begin{equation}\eqlabel{101}
\xymatrix{ A \ar[r]^-{\eta_{A} }\ar[rdd]_{\bigl({\rm Id}_{A}
\otimes\, \Delta_{H} \circ \theta \bigl)\circ \eta_{A} } & {A \otimes a(A) }\ar[dd]^{{\rm Id}_{A} \otimes \psi}  \\
{} & {} \\
{} & {A \otimes H \otimes H} }
\end{equation}
The proof will be finished once we show that $(\theta \otimes
\theta) \circ \Delta$ makes diagram~\equref{101} commutative.
Indeed, as  $f \colon A \to A \otimes H$ is a right $H$-comodule
structure, we have:
\begin{eqnarray*}
\bigl({\rm Id}_{A} \otimes\, (\theta \otimes \theta) \circ
\Delta\bigl)\circ \,\eta_{A}
&=& \bigl({\rm Id}_{A} \otimes \theta \otimes \theta \bigl)\circ \underline{\bigl({\rm Id}_{A} \otimes \Delta\bigl)\circ \, \eta_{A}}\\
&\stackrel{\equref{delta}} {=}& \bigl({\rm Id}_{A} \otimes \theta \otimes \theta \bigl)\circ (\eta_{A} \otimes {\rm Id}_{a(A)})\circ \eta_{A}\\
&=& \bigl(\underline{({\rm Id}_{A} \otimes \theta) \circ \eta_{A}}\ \otimes \theta \bigl)\circ \, \eta_{A}\\\
&\stackrel{\equref{univbialg}} {=}& \bigl(f \otimes \theta\bigl)\circ \, \eta_{A}\\
&=& (f \otimes {\rm Id}_{H})\circ \underline{({\rm Id}_{A} \otimes \theta) \circ \, \eta_{A}}\\
&\stackrel{\equref{univbialg}} {=}& \underline{(f \otimes  {\rm Id}_{H})\circ f}\\
&=& ({\rm Id}_{A} \otimes \Delta_{H}) \circ \underline{f}\\
&\stackrel{\equref{univbialg}} {=}& ({\rm Id}_{A} \otimes \Delta_{H}) \circ ({\rm Id}_{A} \otimes \theta) \circ \eta_{A}\\
&=& ({\rm Id}_{A} \otimes \Delta_{H} \circ \theta) \circ \eta_{A}
\end{eqnarray*}
as desired. Similarly, one can show that  $\varepsilon_H \, \circ
\, \theta = \varepsilon$ and the proof is now finished.
\end{proof}

We end this section with the following two questions:

\emph{1. Are there two finite dimensional non-isomorphic algebras
$A$ and $B$ such that $a(A) \cong a(B)$? (isomorphism of
bialgebras)?}

\emph{2. If $A$ and $B$ are two finite dimensional algebras, is it
true that $a (A \ot B) \cong a(A) \ot a(B)$ (isomorphism of
bialgebras)?}

\section{The automorphisms group and the classification of gradings on algebras}\selabel{sect3}

The power of the quantum symmetry semigroup of an algebra $A$ is
evidenced by its applications to the two open problems mentioned
in the introduction. The first application is related to the
description of the automorphisms group ${\rm Aut}_{{\rm Alg}} (A)$
of $A$.

\begin{theorem} \thlabel{automorf}
Let $A$ be a finite dimensional algebra with basis $\{e_1, \cdots,
e_n\}$ and let $U\bigl (G\bigl( a (A)^{\rm o} \bigl)\bigl)$ be the
group of all invertible group-like elements of the finite dual $a
(A)^{\rm o}$. Then the map defined for any $\theta \in
U\bigl(G\bigl( a(A)^{\rm o} \bigl)\bigl)$ and $i = 1, \cdots, n$
by:
\begin{equation} \eqlabel{izomono}
\overline{\gamma} : U \bigl(G\bigl( a(A)^{\rm o} \bigl) \bigl) \to
{\rm Aut}_{{\rm Alg}} (A), \qquad \overline{\gamma} (\theta) (e_i)
:= \sum_{s=1}^n \, \theta(x_{si}) \, e_s
\end{equation}
is an isomorphism of groups.
\end{theorem}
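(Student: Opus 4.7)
The plan is to deduce the isomorphism from the universal property already established in \coref{morlbz} together with the explicit coalgebra structure on $a(A)$ computed in \prref{bialgebra}, by showing that the bijection $\gamma$ of \coref{morlbz} is in fact a monoid anti/homomorphism when $G(a(A)^{\rm o})$ carries its convolution product and $\End_{\rm Alg}(A)$ carries composition, and then restricting to units on both sides.

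First, I would apply \coref{morlbz} with $B := A$ to obtain a bijection
$\gamma : \Hom_{\rm Alg_k}(a(A), k) \to \End_{\rm Alg}(A)$, $\gamma(\theta) := ({\rm Id}_A \ot \theta) \circ \eta_A$. Evaluating at $e_i$ using \equref{unitadj2} and the canonical identification $A \ot k \cong A$ gives exactly $\gamma(\theta)(e_i) = \sum_{s=1}^n \theta(x_{si})\, e_s$, which is the formula defining $\ol\gamma$. Since $G(a(A)^{\rm o}) = \Hom_{\rm Alg_k}(a(A), k)$ (recalled in the preliminaries), $\gamma$ is already a bijection of sets from $G(a(A)^{\rm o})$ to $\End_{\rm Alg}(A)$.

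Next, and this is the main computational step, I would verify that $\gamma$ is a monoid homomorphism from $\bigl(G(a(A)^{\rm o}), *\bigl)$ to $\bigl(\End_{\rm Alg}(A), \circ\bigl)$, where $*$ denotes the convolution product $(\theta * \psi)(x) = \theta(x_{(1)}) \psi(x_{(2)})$. Using the explicit comultiplication $\Delta(x_{si}) = \sum_t x_{st} \ot x_{ti}$ from \prref{bialgebra}, a direct computation yields
\begin{equation*}
\gamma(\theta * \psi)(e_i) = \sum_{s} (\theta * \psi)(x_{si})\, e_s = \sum_{s, t} \theta(x_{st})\, \psi(x_{ti})\, e_s,
\end{equation*}
while on the other hand
\begin{equation*}
\bigl(\gamma(\theta) \circ \gamma(\psi)\bigl)(e_i) = \gamma(\theta)\Bigl( \sum_t \psi(x_{ti})\, e_t \Bigl) = \sum_{s, t} \theta(x_{st})\, \psi(x_{ti})\, e_s,
\end{equation*}
so that $\gamma(\theta * \psi) = \gamma(\theta) \circ \gamma(\psi)$. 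The identity of $G(a(A)^{\rm o})$ is the counit $\varepsilon$, and using $\varepsilon(x_{si}) = \delta_{s,i}$ from \equref{deltaeps} one sees immediately $\gamma(\varepsilon) = {\rm Id}_A$.

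Finally, since $\gamma$ is a bijective monoid homomorphism sending unit to unit, it must restrict to a bijection between the groups of invertible elements on both sides, namely $U\bigl(G(a(A)^{\rm o})\bigl)$ and $U(\End_{\rm Alg}(A)) = \Aut_{\rm Alg}(A)$. This restriction is precisely $\ol\gamma$, which is therefore a group isomorphism. The main obstacle is essentially bookkeeping: making sure that the convention for the convolution product on $G(a(A)^{\rm o})$ matches the order of composition on $\End_{\rm Alg}(A)$ so that $\gamma$ comes out as a homomorphism rather than an anti-homomorphism; should the orders end up reversed, one composes with the (anti-)isomorphism $\theta \mapsto \theta$ viewed into the opposite monoid, but the group of units is insensitive to this.
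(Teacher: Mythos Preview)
Your proposal is correct and follows essentially the same approach as the paper: apply \coref{morlbz} with $B=A$ to get the bijection $\gamma$, identify $G(a(A)^{\rm o})$ with $\Hom_{\rm Alg_k}(a(A),k)$, verify directly from the comultiplication \equref{deltaeps} that $\gamma$ is a monoid homomorphism sending $\varepsilon$ to ${\rm Id}_A$, and then restrict to units. The paper does not even raise the homomorphism-versus-anti-homomorphism issue you hedge against, since with the stated conventions the orders already match; your caution there is harmless but unnecessary.
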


\begin{proof}
By applying \coref{morlbz} for $B:= A$ it follows that the map
$$
\gamma : {\rm Hom}_{\rm Alg_k} (a (A) , \, k) \to {\rm End}_{{\rm
Alg}} (A), \quad \gamma (\theta) = \bigl( {\rm Id}_{A} \ot \theta
\bigl) \circ \eta_{A}
$$
is bijective. Based on formula \equref{unitadj2}, it can be easily
seen that $\gamma$ takes the form given by \equref{izomono}. As we
mentioned in the preliminaries we have ${\rm Hom}_{\rm Alg_k} (a
(A) , k) = G\bigl( a (A)^{\rm o} \bigl)$. Therefore, since
$\overline{\gamma}$ is the restriction of $\gamma$ to the
invertible elements of the two monoids, the proof will be finished
once we show that $\gamma$ is an isomorphism of monoids. We
mention that the monoid structure on ${\rm End}_{{\rm Alg}} (A)$
is given by the usual composition of endomorphisms of the algebra
$A$, while $G\bigl( a(A)^{\rm o} \bigl)$ is a monoid with respect
to the multiplication of the bialgebra $a(A)^{\rm o}$, that is the
convolution product:
\begin{equation}\eqlabel{convolut}
(\theta_1 \star \theta_2) (x_{sj}) = \sum_{t=1}^n \,
\theta_1(x_{st}) \theta_2(x_{tj})
\end{equation}
for all $\theta_1$, $\theta_2 \in G\bigl( a(A)^{\rm o} \bigl)$ and
$j$, $s = 1, \cdots, n$. Now, for any $\theta_1$, $\theta_2 \in
G\bigl( a (A)^{\rm o} \bigl)$ and $j = 1, \cdots, n$ we have:
\begin{eqnarray*}
&& \bigl(\gamma(\theta_1) \circ \gamma(\theta_2) \bigl) (e_j) =
\gamma(\theta_1) \bigl( \sum_{t=1}^n \, \theta_2 (x_{tj}) e_t
\bigl) = \sum_{s, t = 1}^n \, \theta_1(x_{st}) \theta_2 (x_{tj})\,
e_s \\
&& = \sum_{s=1}^n \, \bigl( \sum_{t=1}^n \, \theta_1(x_{st})
\theta_2 (x_{tj}) \bigl) \, e_s = \sum_{s=1}^n \, (\theta_1 \star
\theta_2) (x_{sj}) \, e_s = \gamma (\theta_1 \star \theta_2) (e_j)
\end{eqnarray*}
thus, $\gamma (\theta_1 \star \theta_2) = \gamma(\theta_1) \circ
\gamma(\theta_2)$, and therefore $\gamma$ respects the
multiplication. We are left to show that $\gamma$ also preserves
the unit. Note that the unit $1$ of the monoid $G\bigl( a (A)^{\rm
o} \bigl)$ is the counit $\varepsilon_{a (A)}$ of the bialgebra $a
(A)$ and we obtain:
$$
\gamma(1) (e_i) = \gamma (\varepsilon_{a (A)}) (e_i) =
\sum_{s=1}^n \, \varepsilon_{a (A)} (x_{si}) \, e_s = \sum_{s=1}^n
\, \delta_{si} \, e_s = e_i = {\rm Id}_{A} (e_i)
$$
Thus we have proved that $\gamma$ is an isomorphism of monoids and
the proof is finished.
\end{proof}

The second application of the bialgebra $a(A)$ is related to the
problem of classification of all $G$-gradings on the algebra $A$.

\begin{proposition}\prlabel{graduari}
Let $G$ be a group and $A$ a finite dimensional algebra. Then
there exists a bijection between the set of all $G$-gradings on
$A$ and the set of all bialgebra homomorphisms $a(A) \to k[G]$.
The bijection is given such that the $G$-grading on $A =
\oplus_{\sigma \in G} \, A_{\sigma}^{(\theta)} $ associated to a
bialgebra map $\theta: a(A) \to k[G]$ is given by:
\begin{equation}\eqlabel{gradass}
A_{\sigma}^{(\theta)} := \{ x \in  A \, | \, \bigl({\rm Id}_{A}
\ot \theta \bigl) \, \circ \, \eta_{A} (x) = x \ot \sigma \}
\end{equation}
for all $\sigma \in G$.
\end{proposition}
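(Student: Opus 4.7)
The plan is to derive the proposition by composing two bijections already available to us: the universal property of $a(A)$ applied to the group bialgebra $k[G]$, and the classical identification of $G$-gradings on $A$ with right $k[G]$-comodule algebra structures.

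First, specializing \thref{univbialg} to $H := k[G]$ yields a bijection between the set ${\rm Hom}_{\rm BiAlg}(a(A),\, k[G])$ of bialgebra homomorphisms $a(A) \to k[G]$ and the set of algebra maps $f \colon A \to A \ot k[G]$ that endow $A$ with a right $k[G]$-comodule structure, i.e.\ right $k[G]$-comodule algebra structures on $A$. Under this bijection, the comodule algebra structure attached to a bialgebra map $\theta \colon a(A) \to k[G]$ is $f_\theta := ({\rm Id}_A \ot \theta) \circ \eta_A$. The map $f_\theta$ is automatically an algebra homomorphism (as a composition of algebra maps), while the right coaction axioms for $f_\theta$ reduce to the coaction axioms satisfied by $\eta_A$ using that $\theta$ is a coalgebra map; the inverse direction is precisely the content of \thref{univbialg}.

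Second, the equivalence \equref{montgra} recalled in \seref{prel} (extracted from \cite[Example 4.1.7]{Mont}) provides a bijection between right $k[G]$-comodule algebra structures $\varrho \colon A \to A \ot k[G]$ on $A$ and $G$-gradings on $A$, sending $\varrho$ to the decomposition $A = \oplus_{\sigma \in G} A_\sigma$ with $A_\sigma = \{a \in A \mid \varrho(a) = a \ot \sigma\}$.

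Composing these two bijections yields the statement: to a bialgebra map $\theta \colon a(A) \to k[G]$ we first associate the comodule algebra structure $f_\theta = ({\rm Id}_A \ot \theta) \circ \eta_A$ and then the $G$-grading with homogeneous components $A_\sigma^{(\theta)} = \{x \in A \mid f_\theta(x) = x \ot \sigma\}$, which is precisely \equref{gradass}. No serious obstacle is anticipated: the whole proof reduces to the composition of two well-understood bijections. The only item worth spelling out carefully is the verification that $({\rm Id}_A \ot \theta) \circ \eta_A$ is indeed a right $k[G]$-comodule algebra structure whenever $\theta$ is a bialgebra map, and this is an immediate consequence of the corresponding structural properties of $\eta_A$ together with $\theta$ being both an algebra and a coalgebra homomorphism.
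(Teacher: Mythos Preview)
Your proof is correct and follows exactly the same route as the paper: apply \thref{univbialg} with $H := k[G]$ to obtain the bijection between bialgebra maps $a(A)\to k[G]$ and right $k[G]$-comodule algebra structures on $A$, and then compose with the classical bijection \equref{montgra} between such comodule structures and $G$-gradings. The only difference is that you spell out a bit more explicitly why $({\rm Id}_A \ot \theta)\circ \eta_A$ is a comodule algebra structure, which the paper leaves implicit.
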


\begin{proof} Applying \thref{univbialg} for the bialgebra $H := k[G]$
yields a bijection between the set of all bialgebra homomorphisms
$a(A) \to k[G]$ and the set of all algebra homomorphisms $f \colon
A \to A \otimes k[G]$ which makes $A$ into a right
$k[G]$-comodule. The proof is now finished since the latter set is
in bijective correspondence with the set of all $G$-gradings on
$A$ (see \equref{montgra} and \cite[Example 4.1.7]{Mont}).
\end{proof}

It is easy to see that, under the bijection given by
\prref{graduari}, the $G$-grading on $A$ associated to the trivial
bialgebra map $a(A) \to k[G], \, x \mapsto \varepsilon (x)$, is
just the trivial grading on $A$, that is $A_1 := A$ and
$A_{\sigma} := 0$, for any $1 \neq \sigma \in G$. In the next step
we shall classify all $G$-gradings on a given algebra $A$. We need
to recall one more elementary fact from Hopf algebras: if $H$ and
$L$ are two bialgebras over a field $k$ then the abelian group
${\rm Hom} (H, \, L)$ of all $k$-linear maps is an unital
associative algebra under the convolution product (\cite{Sw}):
$(\theta_1 \star \theta_2) (h) := \sum \, \theta_1 (h_{(1)})
\theta_2 (h_{(2)})$, for all $\theta_1$, $\theta_2 \in {\rm Hom}
(H, \, L)$ and $h\in H$.

\begin{definition}\delabel{conjug}
Let $G$ be a group and $A$ a finite dimensional algebra. Two
morphism of bialgebras $\theta_1, \theta_2: a(A) \to k[G]$ are
called \emph{conjugate} and denote $\theta_1 \approx \theta_2$, if
there exists $g \in U\bigl (G\bigl( a (A)^{\rm o} \bigl)\bigl)$ an
invertible group-like element of the finite dual $a (A)^{\rm o}$
such that $\theta_2 = g \star \theta_1 \star g^{-1}$, in the
convolution algebra ${\rm Hom} \bigl( a (A) , \, k[G] \bigl)$.
\end{definition}

We denote by ${\rm Hom}_{\rm BiAlg} \, \bigl( a (A) , \, k[G]
\bigl)/\approx $ the quotient set of the set of all bialgebra maps
$a (A) \to k[G]$ through the above equivalence relation and by
$\hat{\theta}$ the equivalence class of $\theta \in {\rm Hom}_{\rm
BiAlg} \, \bigl( a (A) , \, k[G] \bigl)$.

\begin{theorem} \thlabel{nouaclas}
Let $G$ be a group, $A$ a finite dimensional algebra and $G-{\rm
\textbf{gradings}}(A)$ the set of isomorphisms classes of all
$G$-gradings on $A$. Then the map
$$
{\rm Hom}_{\rm BiAlg} \, \bigl( a (A) , \, k[G] \bigl)/\approx
\,\,\, \mapsto \,\, G-{\rm \textbf{gradings}} (A), \qquad
\hat{\theta} \mapsto A^{(\theta)} := \oplus_{\sigma \in G} \,
A_{\sigma}^{(\theta)}
$$
where $A_{\sigma}^{(\theta)} = \{ x \in  A \, | \, \bigl({\rm
Id}_{A} \ot \theta \bigl) \, \circ \, \eta_{A} (x) = x \ot \sigma
\}$, for all $\sigma \in G$, is bijective.
\end{theorem}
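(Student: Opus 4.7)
The plan is to build on \prref{graduari} and \thref{automorf}. \prref{graduari} already supplies a bijection $\theta \mapsto A^{(\theta)}$ between bialgebra maps $a(A) \to k[G]$ and $G$-gradings on $A$, so the only task left is to prove that $\theta_1, \theta_2 \in \Hom_{\rm BiAlg}(a(A), k[G])$ determine isomorphic $G$-gradings if and only if $\theta_1 \approx \theta_2$. This single equivalence delivers well-definedness on the quotient, injectivity, and surjectivity simultaneously.

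First I would translate isomorphism of gradings into the language of comodule algebras. By \cite[Example 4.1.7]{Mont}, the grading $A^{(\theta)}$ corresponds to the right $k[G]$-comodule algebra structure $\rho_\theta := (\Id_A \otimes \theta) \circ \eta_A$, and an algebra automorphism $w$ of $A$ realises an isomorphism $A^{(\theta_1)} \cong A^{(\theta_2)}$ precisely when it intertwines the two coactions, $(w \otimes \Id_{k[G]}) \circ \rho_{\theta_1} = \rho_{\theta_2} \circ w$. By \thref{automorf}, every such $w$ has the form $\overline{\gamma}(g) = (\Id_A \otimes g) \circ \eta_A$ for a unique $g \in U(G(a(A)^{\rm o}))$.

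Next I would carry out a Sweedler computation using the coassociativity $(\eta_A \otimes \Id) \circ \eta_A = (\Id_A \otimes \Delta) \circ \eta_A$ to show that
\begin{equation*}
(\overline{\gamma}(g) \otimes \Id) \circ \rho_{\theta_1} = (\Id_A \otimes (g \star \theta_1)) \circ \eta_A, \quad \rho_{\theta_2} \circ \overline{\gamma}(g) = (\Id_A \otimes (\theta_2 \star g)) \circ \eta_A,
\end{equation*}
where $g$ is viewed inside $\Hom(a(A), k[G])$ via $k \hookrightarrow k[G]$ and $\star$ denotes convolution. The crucial observation is that both $g \star \theta_1$ and $\theta_2 \star g$ are genuine algebra homomorphisms $a(A) \to k[G]$, because $g$ takes values in the central scalars of $k[G]$. \coref{initialobj} --- the first universal property of $a(A)$ --- then lifts the intertwining equation to the equality $g \star \theta_1 = \theta_2 \star g$ of algebra maps on $a(A)$. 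Convolving on the right with $g^{-1}$ delivers $\theta_2 = g \star \theta_1 \star g^{-1}$, that is $\theta_1 \approx \theta_2$, and every step is reversible.

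The main obstacle to be careful about is exactly the step that passes from equality after composition with $\eta_A$ to equality of maps on all of $a(A)$: this is where the universal property of \coref{initialobj} is essential, and its applicability depends on verifying that the two convolution products $g \star \theta_1$ and $\theta_2 \star g$ are honest algebra maps, which relies on the centrality of the scalar image of $g$ in $k[G]$. Once this equivalence is established, well-definedness of the map on $\Hom_{\rm BiAlg}(a(A), k[G])/\approx$ is the forward implication (and produces an explicit intertwining automorphism $w = \overline{\gamma}(g)$ from a conjugator $g$), while surjectivity onto $G\text{-}\mathbf{gradings}(A)$ is immediate from \prref{graduari}.
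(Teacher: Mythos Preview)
Your proposal is correct and follows essentially the same route as the paper: reduce to \prref{graduari}, translate isomorphism of gradings into the $k[G]$-comodule intertwining condition, parametrize the automorphism by $g\in U\bigl(G(a(A)^{\rm o})\bigr)$ via \thref{automorf}, and identify the intertwining condition with $g\star\theta_1=\theta_2\star g$. The only cosmetic difference is that the paper carries out the computation explicitly on the basis $\{e_i\}$ and then passes from the generators $x_{ai}$ to all of $a(A)$, whereas you phrase the same step via coassociativity of $\eta_A$ and \coref{initialobj}; your explicit verification that $g\star\theta_1$ and $\theta_2\star g$ are algebra maps (needed for either version of that passage) is a point the paper leaves implicit.
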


\begin{proof}
Let $\{e_1, \cdots, e_n\}$ be a basis in $A$. Using
\prref{graduari} we obtain that for any $G$-grading $A =
\oplus_{\sigma \in G} \, A_{\sigma}$ on $A$ there exists a unique
bialgebra map $\theta: a (A) \to k[G]$ such that $A_{\sigma} =
A_{\sigma}^{(\theta)}$, for all $\sigma \in G$. It remains to
prove when two such $G$-gradings are isomorphic. Let $\theta_1$,
$\theta_2 : a (A) \to k[G]$ be two bialgebra maps and let $A =
A^{(\theta_1)} := \oplus_{\sigma \in G} \, A_{\sigma}^{(\theta_1)}
= \oplus_{\sigma \in G} \, A_{\sigma}^{(\theta_2)} =:
A^{(\theta_2)}$ be the associated $G$-gradings. It follows from
the proof of \prref{graduari} that given a $G$-grading on $A$ is
equivalent (and the correspondence is bijective) to a right
$k[G]$-comodule structure $\rho : A \to A \ot k[G]$ on $A$ such
that the right coaction $\rho$ is a morphism of algebras.
Moreover, the right coactions $\rho^{(\theta_1)}$ and
$\rho^{(\theta_2)} : A \to A \ot k[G]$ are implemented form
$\theta_1$ and $\theta_2$ using \thref{univbialg}, that is they
are given for any $j = 1, 2$ by
\begin{equation} \eqlabel{3000}
\rho^{(\theta_j)} : A \to A \ot k[G], \qquad \rho^{(\theta_j)}
(e_i) = \sum_{s=1}^n \, e_s \ot \theta_j (x_{si})
\end{equation}
for all $i = 1, \cdots, n$. It is well known in Hopf algebras that
the two $G$-gradings $A^{(\theta_1)}$ and $A^{(\theta_2)}$ are
isomorphic if and only if $(A, \, \rho^{(\theta_1)})$ and $(A, \,
\rho^{(\theta_2)})$ are isomorphic as algebras and right
$k[G]$-comodules, that is there exists $w : A \to A$ an algebra
automorphism of $A$ such that $\rho^{(\theta_2)} \, \circ w = (w
\ot {\rm Id}_{k[G]}) \, \circ \rho^{(\theta_1)}$. We apply now
\thref{automorf}: for any algebra automorphism $w : A \to A$ there
exists a unique $g \in U\bigl (G\bigl( a (A)^{\rm o} \bigl)\bigl)$
an invertible group-like element of the finite dual $a (A)^{\rm
o}$ such that $w = w_g$ is given for any $i = 1, \cdots, n$ by
\begin{equation} \eqlabel{3001}
w_g (e_i) = \sum_{s=1}^n \, g(x_{si}) \, e_s
\end{equation}
Using \equref{3000} and \equref{3001} we can easily compute that:
$$
\bigl (\rho^{(\theta_2)} \, \circ w_g \bigl) (e_i) = \sum_{a=1}^n
\, e_a \ot \bigl( \sum_{s=1}^n \, \theta_2 (x_{as}) g(x_{si})
\bigl)
$$
and
$$
\bigl( (w_g \ot {\rm Id}_{k[G]}) \, \circ \rho^{(\theta_1)} \bigl)
(e_i) = \sum_{a=1}^n \, e_a \ot  \bigl(\sum_{s=1}^n \, g(x_{as})
\theta_1 (x_{si}) \bigl)
$$
for all $i = 1, \cdots, n$. Thus, the algebra automorphism $w_g :
A \to A$ is also a right $k[G]$-comodule map if and only if
\begin{equation} \eqlabel{3002}
\sum_{s=1}^n \, g(x_{as}) \theta_1 (x_{si}) = \sum_{s=1}^n \,
\theta_2 (x_{as}) g(x_{si})
\end{equation}
for all $a$, $i = 1, \cdots, n$. Taking into account the formula
of the comultiplication on the bialgebra $a(A)$, the equation
\equref{3002} can be rewritten in a compact form as $(g \star
\theta_1) ( x_{ai}) = (\theta_2 \star g ) ( x_{ai})$, for all $a$,
$i = 1, \cdots, n$ in the convolution algebra ${\rm Hom} \bigl(a
(A) , \, k[G] \bigl)$, or (since $\{x_{ai}\}_{a, i = 1, \cdots,
n}$ is a system of generators of $a (A)$) just as $g \star
\theta_1 = \theta_2 \star g$. We also note that $g: a (A) \to k$
is an invertible group-like element, that is it is an invertible
element in the above convolution algebra.

To conclude, we have proved that two $G$-gradings $
A^{(\theta_1)}$ and $A^{(\theta_2)}$ on $A$ associated to two
bialgebra maps $\theta_1$, $\theta_2 : a (A) \to k[G]$ are
isomorphic if and only if there exists $g \in U\bigl (G\bigl( a
(A)^{\rm o} \bigl)\bigl)$ such that $\theta_2 = g \star \theta_1
\star g^{-1}$, in the convolution algebra ${\rm Hom} \bigl( a (A)
, \, k[G] \bigl)$, that is $\theta_1$ and $\theta_2$ are
conjugated and this finished the proof.
\end{proof}

\thref{automorf} and \thref{nouaclas} offer a theoretical answer
to the two classic problems, reducing them to typical problems for
Hopf algebras. While the explicit description of the bialgebra
$a(A)$ is relatively easy (albeit using a laborious computation,
eliminating the redundant relations among the $n^3$ relations
\equref{relatii2intro} that define it), the really difficult part
is the description of the finite dual $a (A)^{\rm o}$ using
generators and relations. Unfortunately, the explicit description
of the finite dual of a finite generated bialgebra is a problem
that has not been studied until now. It does however merit
attention in the future; the first steps in this direction have
been taken very recently in \cite{cou, Ge, LiLiu}.

\begin{example} \exlabel{a(A)1}
Let $A := k[X]/(X^2)$. Then $a \bigl(k[X]/(X^2) \bigl)$ is the
algebra $k \lan x, y \, | \, x^2 = 0, \, xy + yx = 0 \ran$ with
the coalgebra structure given by:
$$
\Delta (x) = x \ot y + 1 \ot x, \, \, \Delta(y) = y \ot y, \quad
\varepsilon (x) = 0, \,\, \varepsilon(y) = 1
$$
Indeed, let $\{e_1 := 1, \, e_2 := x\}$ be a basis of $A$, where
$x$ is the class of $X$ in $A$. Then the only non-zero structure
constants of the algebra $A$ are $\tau_{1, j}^j = \tau_{j, 1}^j =
1$, for $j = 1, 2$. Now, it is just a routine computation, to
reduce the eight relations \equref{relatii2} defining the algebra
$a \bigl(k[X]/(X^2) \bigl)$ to:
$$
x_{12}^2 = 0, \qquad x_{12} x_{22} +  x_{22}x_{12} = 0
$$
Denoting $x_{12} = x$ si $x_{22} = y$ the conclusion follows (the
coalgebra structure arises from \prref{bialgebra}). Applying
\thref{automorf} we obtain that ${\rm Aut}_{{\rm Alg}}
\bigl(k[X]/(X^2) \bigl)$ is in bijection with the set of all
morphisms of algebras $\vartheta : a \bigl(k[X]/(X^2) \bigl) \to
k$ such that the matrix $(\vartheta (x_{ij})) \in {\rm M}_2 (k)$
is invertible. Such a morphism is defined by: $\vartheta (x_{11})
= \vartheta (1): = 1$, $\vartheta (x_{21}) = \vartheta (0) := 0$,
$\vartheta (x_{12}) := \alpha$ and $\vartheta (x_{22}) := \beta$,
for some $\alpha, \beta \in k$. Since, $x_{12}^2 = 0$ we obtain
that $\alpha = 0$ and $\beta \neq 0$ since the matrix $(\vartheta
(x_{ij}))$ is invertible. Thus, ${\rm Aut}_{{\rm Alg}}
\bigl(k[X]/(X^2) \bigl) \, \cong \, k^*$.
\end{example}

\begin{example} \exlabel{a(A)2}
Let $ A: = \begin{pmatrix}
k & k \\
0 & k
\end{pmatrix} \subseteq {\rm M}_2 (k)
$ be the algebra of upper-triangular matrices with the basis
$\{e_1 := I_2, \, e_2 := e_{21}, \,  e_3: = e_{22} \}$. Then
$a(A)$ is the algebra generated by $\{x_{ij} \, | \, i, j = 1, 2,
3 \}$ and the relations:
\begin{eqnarray*}
&& x_{11} =1, \,\,  x_{21} = x_{31} = 0, \,\,  x_{12}^2 = 0, \,\,
x_{12}x_{13} = x_{12}, \,\,  x_{13}x_{12} = 0 \\
&& x_{13}^2 = x_{13}, \,\,  x_{12} x_{22} + x_{22} x_{12} = -
x_{22} x_{32}, \,\,  x_{12} x_{23} + x_{23} x_{13} + x_{22} x_{33}
= x_{22} \\
&& x_{13} x_{22} + x_{23} (x_{12} + x_{32}) = 0, \,\, x_{13}
x_{23} + x_{23} (x_{13} + x_{33}) = x_{23}\\
&& x_{32} x_{33} + x_{32}x_{13} + x_{12} x_{33} = x_{32}, \,\,
x_{32}^2 + x_{32}x_{12} + x_{12} x_{32} = 0
\end{eqnarray*}

Indeed, the only non-zero structure constants of the algebra $A$
are:
$$
\tau_{1, j}^j = \tau_{j, 1}^j = \tau_{2, 3}^2 = \tau_{3, 3}^3 := 1
$$
for all $j = 1, 2, 3$. A routine computation shows that out of the
27 relations \equref{relatii2} defining algebra $a(A)$ only the
above remain.
\end{example}

\end{document}